\newtheorem{theorem}{Theorem}[section]
\newtheorem{lemma}[theorem]{Lemma}
\newcommand{\NN}{\mathbb{N}}
\newcommand{\RR}{\mathbb{R}}
\newcommand{\st}{\; | \;}
\newcommand{\ex}[1]{\exists #1 \;} 
\title{Uniform distribution and algorithmic randomness}
\author{Jeremy Avigad}
\address{Departments of Philosophy and Mathematical Sciences\\
Carnegie Mellon University\\
Pittsburgh, PA 15213}
\subjclass[2010]{03D32, 11K06}
\thanks{Work partially supported by NSF grant DMS-1068829 and AFOSR grant FA 9550-12-1-0370. I am grateful to Terence Tao for advice regarding the main result of Section~\ref{ud:kurtz:section}, and to Jason Rute and Edward Dean for helpful discussions. I am also grateful to two anonymous referees for their careful reading, and corrections and suggestions.}
\begin{document}

\begin{abstract}
A seminal theorem due to Weyl \cite{weyl:16} states that if $(a_n)$ is any sequence of distinct integers, then, for almost every $x \in \RR$, the sequence $(a_n x)$ is uniformly distributed modulo one. In particular, for almost every $x$ in the unit interval, the sequence $(a_n x)$ is uniformly distributed modulo one for every \emph{computable} sequence $(a_n)$ of distinct integers. Call such an $x$ \emph{UD random}. Here it is shown that every Schnorr random real is UD random, but there are Kurtz random reals that are not UD random. On the other hand, Weyl's theorem still holds relative to a particular effectively closed null set, so there are UD random reals that are not Kurtz random.
\end{abstract}

\maketitle

\section{Introduction}
\label{introduction:section}

Let $\{ x \}$ denote the fractional part of a real number $x$, and let $\lambda$ denote Lebesgue measure. A sequence $(x_n)_{n \in \NN}$ of real numbers is said to be \emph{uniformly distributed modulo one} if for every interval $I \subseteq [0,1]$, 
\[
 \lim_{n \to \infty} \frac{|\{ i < n \st \{x_i\} \in I \}|}{n} = \lambda(I).
\]
In words, a sequence is uniformly distributed modulo one if the limiting frequency with which it visits any given interval is what one would expect if the elements of the sequence were chosen at random. A remarkable theorem by Hermann Weyl \cite{weyl:16} states the following:
\begin{theorem}
If $(a_n)$ is any sequence of distinct integers, then for almost every $x$, $(a_n x)$ is uniformly distributed modulo one.
\end{theorem}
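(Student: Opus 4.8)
The plan is to proceed via the classical Weyl criterion, which characterizes uniform distribution in terms of exponential sums: a sequence $(x_n)$ is uniformly distributed modulo one if and only if for every nonzero integer $h$,
\[
\lim_{N \to \infty} \frac{1}{N} \sum_{n=1}^{N} e^{2\pi i h x_n} = 0.
\]
Applying this with $x_n = a_n x$, it suffices to show that for almost every $x$ and every nonzero integer $h$, the averages $\frac{1}{N}\sum_{n \le N} e^{2\pi i h a_n x}$ tend to zero. Since there are only countably many choices of $h$, and a countable intersection of full-measure sets has full measure, I can fix a single nonzero integer $h$ and argue for that $h$ alone. Writing $b_n = h a_n$, the hypothesis that the $a_n$ are distinct and $h \neq 0$ guarantees that the $b_n$ are distinct integers as well, so the problem reduces to the following claim: for any sequence $(b_n)$ of distinct integers, the averages $S_N(x) = \frac{1}{N}\sum_{n=1}^N e^{2\pi i b_n x}$ converge to $0$ for almost every $x \in [0,1]$.

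The starting point for the claim is an $L^2$ computation. Expanding the square and using orthogonality of the characters $e^{2\pi i k x}$ on $[0,1]$ — the integral over $[0,1]$ vanishes unless $k = 0$ — the distinctness of the $b_n$ leaves only the $N$ diagonal terms, so that
\[
\int_0^1 |S_N(x)|^2 \, dx = \frac{1}{N^2} \sum_{m,n \le N} \int_0^1 e^{2\pi i (b_n - b_m) x}\, dx = \frac{1}{N}.
\]
Thus $S_N \to 0$ in $L^2$, and hence in measure, but upgrading this to almost-everywhere convergence takes more work. I would first restrict to the sparse subsequence $N = k^2$. Since $\sum_k \int_0^1 |S_{k^2}|^2\,dx = \sum_k 1/k^2 < \infty$, monotone convergence shows $\sum_k |S_{k^2}(x)|^2 < \infty$ for almost every $x$, and in particular $S_{k^2}(x) \to 0$ almost everywhere.

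It then remains to control the oscillation of $S_N$ across each block $k^2 \le N < (k+1)^2$. Writing $T_N = N S_N = \sum_{n \le N} e^{2\pi i b_n x}$, I would bound the maximal deviation $M_k(x) = \max_{k^2 \le N < (k+1)^2} |T_N(x) - T_{k^2}(x)|$ crudely by $M_k^2 \le \sum_{N=k^2}^{(k+1)^2-1} |T_N - T_{k^2}|^2$; the same orthogonality computation shows each term integrates to $N - k^2 \le 2k$, so $\int_0^1 M_k^2\,dx = O(k^2)$. Consequently $\sum_k \int_0^1 (M_k/k^2)^2\,dx = O\big(\sum_k k^{-2}\big) < \infty$, whence $M_k(x)/k^2 \to 0$ almost everywhere. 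Since $N \ge k^2$ throughout the block, this forces $|T_N - T_{k^2}|/N \to 0$, and combined with $T_{k^2}/N = (k^2/N)\,S_{k^2} \to 0$ (using $k^2/N \le 1$) we conclude $S_N(x) \to 0$ almost everywhere.

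I expect the main obstacle to be precisely this last step: the clean $L^2$ estimate only delivers convergence along a subsequence, and the crux of the argument is the maximal inequality that tames the fluctuations within each block. The choice of the blocks $[k^2,(k+1)^2)$ is what makes the relevant series converge, since it balances the summability of $\sum_k 1/k^2$ along the subsequence against gaps small enough that the accumulated oscillation, once divided by $N \ge k^2$, still vanishes.
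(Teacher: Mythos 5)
Your proposal is correct, and its skeleton is the same as the paper's: reduce to the Weyl criterion, note that for each nonzero $h$ the integers $ha_n$ remain distinct, compute $\int_0^1 |S_N|^2\,dx = 1/N$ by orthogonality, and pass to the subsequence $N = k^2$ where the $L^2$ bounds are summable. The one place you genuinely diverge is the step you single out as the crux --- controlling $S_N$ for $k^2 \le N < (k+1)^2$ --- and here the paper's treatment is much lighter than yours: since each summand $e^{2\pi i b_n x}$ has modulus $1$, one has the \emph{pointwise, deterministic} bound $|T_N(x) - T_{k^2}(x)| \le N - k^2 \le 2k$ for every $x$, hence $|S_N(x)| \le |S_{k^2}(x)| + O(1/k)$ uniformly in $x$, with no further measure theory needed; the paper dispatches this in one line ($|S_m(x)| \le |S_{n^2}(x)| + 2/\sqrt{n}$ for $n^2 \le m < (n+1)^2$). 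Your maximal-function argument --- bounding $M_k^2$ by the sum of $|T_N - T_{k^2}|^2$ over the block and integrating --- is valid, but it is doing work that the trivial triangle inequality already does, and it slightly misidentifies where the difficulty of the theorem lies: the choice of the square subsequence is indeed what makes $\sum 1/k^2$ converge, but the in-block fluctuations are tame for the trivial reason above. (Such $L^2$ maximal estimates do become the right tool in settings where the summands are not uniformly bounded or the blocks are longer, so the technique is worth keeping; it is just not needed here.) One last remark: the paper actually proves an effectivized version of this argument, packaging the exceptional sets $A_{i,\varepsilon,m}$ into a Schnorr test with computable measures, which is what yields its Theorem~\ref{main:thm}; your blind proof establishes exactly the classical statement asked for, so nothing is missing for the theorem as stated.
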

\noindent This seminal result lies at the intersection of harmonic analysis, number theory, ergodic theory, and computer science; see, for example, \cite{chazelle:00,harman:98,kuipers:niederreiter:74,rosenblatt:weierdl:95}.
 
Now let us restrict attention to \emph{computable} sequences $(a_n)$ of distinct integers. Since there are only countably many of these, it follows that almost every $x \in [0,1]$ has the property that $(a_n x)$ is uniformly distributed modulo one for each such sequence. Let us say that such an $x$ is \emph{UD random}. Our goal here is to explore the relationship of UD randomness to other senses in which a real number in the unit interval can be said to be ``random.''

Section~\ref{results:section} considers sufficient conditions for UD randomness. In particular, in the terminology of algorithmic randomness, every Schnorr random element of $[0,1]$ is UD random, but there are Kurtz random elements of $[0,1]$ that are not. 

Regarding consequences of UD randomness, recall that a real number $x$ is \emph{normal} to a base $b > 1$ if, when it is expressed in that base, each sequence of $k$ digits occurs with limiting frequency $b^{-k}$. It is not hard to see that a real number $x$ is normal with respect to base $b$ if and only if the sequence $(b^k x)$ is uniformly distributed modulo one, so every UD random real is normal to every base. In fact, more is true: if $x$ is UD random and $(a_n)$ is any computable sequence of distinct integers, then the limiting frequency of occurrences of any $k$-digit block at positions $a_0, a_1, a_2, \ldots$ of $x$ is again $b^{-k}$. Thus a UD random number looks random, at least in some ways. 

In Section~\ref{ud:kurtz:section}, however, I show that there are straightforward ways that a UD random number can look very \emph{nonrandom}. Specifically, consider the set $C$ of elements $x$ in $[0,1]$ such that for every $n$, the $2^{2n}$-th digit in the binary expansion of $x$ is equal to the $2^{2n+1}$-st digit. Then Weyl's theorem holds with respect to the natural measure on $C$, showing that, in particular, almost every element of $C$ is UD random.

\section{A sufficient condition for UD randomness}
\label{results:section}

The subject of algorithmic randomness \cite{downey:hirschfeldt:10,nies:09} aims to characterize different senses in which a real number (or, say, an infinite binary sequence) can be said to be ``random.'' A subset $G$ of $\RR$ is said to be \emph{effectively open} if there are computable sequences $(a_i)_{i \in \NN}$ and $(b_i)_{i \in \NN}$ of rational numbers such that $G = \bigcup_i (a_i, b_i)$. An effectively open subset of the unit interval is obtained by restricting $G$ to $[0,1]$. A sequence $(G_j)_{j \in \NN}$ of effectively open sets is \emph{uniformly effective} if the representing sequences $(a^j_i)_{i \in \NN}$ and $(b^j_i)_{i \in \NN}$ can be computed by a single algorithm with $j$ as a parameter. A \emph{Martin-L\"of test} is a uniformly effective sequence of open sets $(G_j)$ such that for each $j$, $\lambda(G_j) \leq 2^{-j}$. An element $x$ of $[0,1]$ \emph{fails} the Martin-L\"of test $(G_j)$ if it is an element of $\bigcap_j G_j$, and \emph{passes} the test otherwise. An element $x$ of $[0,1]$ is \emph{Martin-L\"of random} if it passes every Martin-L\"of test. In other words, $x$ is Martin-L\"of \emph{non}random if it can be covered, effectively, by arbitrarily small open sets, and is thus contained in an effectively presented null $G_\delta$ set.

One can weaken or strengthen this notion of randomness by restricting or enlarging the class of tests. For example, a \emph{Schnorr test} is a Martin-L\"of test with the additional property that for each $j$, the measure of $G_j$ is computable. This has the effect that when enumerating the intervals that cover the nonrandoms with a set of size at most $2^{-j}$, one knows how much of the set is yet to come. An element $x$ of $[0,1]$ is \emph{Schnorr random} if it passes every Schnorr test.

An element $x$ of $[0,1]$ is \emph{Kurtz random} if it is contained in an effectively closed set of measure $0$, that is, the complement of an effectively open set of measure $1$. Clearly every Martin L\"of random element of $[0,1]$ is Schnorr random, and every Schnorr random element is Kurtz random. The notion of Kurtz randomness is fairly weak. For example, it is not hard to show that if a real number $x$ is weakly 1-generic then it is Kurtz random but its binary digits fail to satisfy the law of large of numbers. (See \cite[Section 2.24]{downey:hirschfeldt:10} for the definition of weak 1-genericity, and \cite[Section 8.11]{downey:hirschfeldt:10} for the facts just mentioned.) In particular, this shows that there are Kurtz random reals that are not UD random.

The main result of this section is this:
\begin{theorem}
\label{main:thm}
 Every Schnorr random element of $[0,1]$ is UD random.
\end{theorem}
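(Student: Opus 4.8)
The plan is to reduce uniform distribution to the classical Weyl criterion and then show that, for each fixed computable sequence and each nonzero frequency, the exceptional set of $x$ can be covered by a genuine \emph{Schnorr} test. Since there are only countably many computable sequences $(a_n)$ of distinct integers and countably many nonzero $h \in \ZZ$, and since a Schnorr random real avoids every Schnorr null set, it suffices to fix one such pair and produce a single Schnorr test whose intersection contains every $x$ for which the relevant Weyl sum fails to vanish. Recall that $(a_n x)$ is uniformly distributed modulo one if and only if for every nonzero integer $h$ the normalized exponential sums $S_N(x) = \frac{1}{N}\sum_{n < N} e^{2\pi i h a_n x}$ tend to $0$ as $N \to \infty$.

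First I would record the basic $L^2$ estimate that drives Weyl's theorem: writing $\widetilde S_N(x) = N S_N(x)$ and expanding $\int_0^1 |\widetilde S_N(x)|^2\,dx$, the cross terms integrate to $0$ and only the diagonal survives, so that, because the $a_n$ are distinct, $\int_0^1 |\widetilde S_N(x)|^2\,dx = N$. The naive bound $\lambda(\{|S_N| > \varepsilon\}) \le 1/(\varepsilon^2 N)$ from Chebyshev decays only like $1/N$ and is not summable, so a direct Borel--Cantelli argument fails. The standard remedy is to pass to the subsequence $N = n^2$, along which $\int_0^1 |S_{n^2}|^2\,dx = 1/n^2$ is summable, and then to control the gaps $n^2 \le N < (n+1)^2$ by the maximal quantity $T_n(x) = \max_{n^2 \le N < (n+1)^2} |\widetilde S_N(x) - \widetilde S_{n^2}(x)|$. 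Bounding the maximum by the sum of squares and integrating term by term gives $\int_0^1 T_n(x)^2\,dx \le \sum_{N=n^2}^{(n+1)^2 - 1}(N - n^2) = O(n^2)$, again by distinctness, so that $\int_0^1 (T_n/n^2)^2\,dx = O(1/n^2)$ is summable as well.

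Combining these, if $|S_N(x)| > 1/k$ for some $N$ in the block $[n^2,(n+1)^2)$, then $|\widetilde S_N(x)| > n^2/k$, and since $|\widetilde S_N| \le |\widetilde S_{n^2}| + T_n$, either $|\widetilde S_{n^2}(x)| > n^2/(2k)$ or $T_n(x) > n^2/(2k)$; call these events $B^k_n$ and $C^k_n$. By Chebyshev, $\lambda(B^k_n \cup C^k_n) = O(k^2/n^2)$, so $\sum_n \lambda(B^k_n \cup C^k_n) < \infty$. For each level $j$ I would then take $G_j = \bigcup_{n \ge m} (B^k_n \cup C^k_n)$ with $m = m(j,k)$ chosen so that the tail of this convergent series is at most $2^{-j}$; any $x$ with $S_N(x) \not\to 0$ lands in $\bigcap_j G_j$ for some $k$, and each $B^k_n, C^k_n$ is effectively open uniformly because $S_{n^2}$ and the finitely many partial sums defining $T_n$ are computable trigonometric polynomials.

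The step I expect to be the main obstacle is verifying that $(G_j)$ is a \emph{Schnorr} test rather than merely a Martin--L\"of test: this requires $\lambda(G_j)$ to be computable, not just lower semicomputable, which is exactly the extra demand that forces the hypothesis to be Schnorr randomness. Since $G_j$ is an infinite union of overlapping open sets, its measure is not obviously computable. I would resolve this by combining the computable bound on the summable tail $\sum_{n \ge m} \lambda(B^k_n \cup C^k_n)$ with the individual computability of $\lambda(B^k_n \cup C^k_n)$ — each is the measure of a superlevel set of a computable trigonometric polynomial at a rational threshold, and since the corresponding level set is finite and hence null, that measure is both lower and upper semicomputable, so computable — in order to approximate $\lambda(G_j)$ to within any prescribed accuracy while controlling the error by the tail. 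Equivalently, this is precisely the situation covered by the effective Borel--Cantelli principle for Schnorr randomness, which guarantees that a Schnorr random $x$ lies in only finitely many sets of a uniformly effective sequence whose measures are uniformly computable and sum to a computable real. Feeding in the sets $B^k_n \cup C^k_n$ then shows that every Schnorr random $x$ eventually escapes them, whence $S_N(x) \to 0$ for every nonzero $h$ and every computable $(a_n)$, so that $x$ is UD random.
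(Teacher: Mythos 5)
Your proposal is correct and takes essentially the same route as the paper's proof: the Weyl criterion, the orthogonality computation $\int_0^1 |S_N(x)|^2\,dx = 1/N$, Chebyshev along the subsequence $N = n^2$, control of the gaps between consecutive squares, and a test whose measure is computable because the relevant level sets are finite (hence null) and the tails are dominated by a computable convergent series. The only differences are organizational: the paper handles the gaps with the deterministic pointwise bound $|S_m(x)| \le |S_{n^2}(x)| + 2/\sqrt{n}$ for $n^2 \le m < (n+1)^2$ (which makes your maximal events $C^k_n$ superfluous, since $T_n(x) \le 2n$ pointwise and hence $C^k_n = \emptyset$ once $n \ge 4k$), and it merges everything into a single uniform Schnorr test via an enumeration of pairs $(i,\varepsilon)$ rather than one test per sequence, frequency $h$, and threshold $k$ --- a uniformity the theorem itself does not require, but which the paper exploits for its subsequent corollary producing a computable UD random real.
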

The proof of this lemma follows conventional proofs of Weyl's theorem, for example, as presented in \cite{chazelle:00,harman:98,kuipers:niederreiter:74}; we need only make certain constructive and quantitative aspects of the argument explicit. To start with, we need an equivalent formulation of uniform distribution modulo one.

\begin{lemma}
For any sequence $(x_n)$ of real numbers, the following are equivalent:
\begin{enumerate}
 \item $(x_n)$ is uniformly distributed modulo one.
 \item For any continuous function $f: [0,1] \to \RR$, 
\[
 \lim_{n \to \infty} \frac{1}{n} \sum_{j < n} f(x_j \bmod 1) = \int_0^1 f(x) \; dx.
\]
 \item For any integer $h \neq 0$, 
\[
 \lim_{n \to \infty} \frac{1}{n} \sum_{j < n} e^{2\pi i h x_j} = 0.
\]
\end{enumerate}
\end{lemma}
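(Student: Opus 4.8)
The plan is to establish the cyclic chain of implications $(1) \Rightarrow (2) \Rightarrow (3) \Rightarrow (1)$, which is the classical Weyl criterion; each link rests on approximating one class of test functions by another.

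To show $(1) \Rightarrow (2)$, I would first observe that $(1)$ is precisely the assertion of $(2)$ for the indicator function $\mathbf{1}_I$ of an interval $I \subseteq [0,1]$ (the endpoints are immaterial, contributing Lebesgue measure zero and affecting only finitely many terms). By linearity this gives $(2)$ for every step function. For an arbitrary continuous $f$, I would use uniform continuity to trap $f$ between step functions $g \le f \le G$ with $\int_0^1 (G - g) \, dx < \varepsilon$, sandwich the averages $\frac{1}{n}\sum_{j<n} f(x_j \bmod 1)$ between those of $g$ and $G$, and let $\varepsilon \to 0$.

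The implication $(2) \Rightarrow (3)$ is immediate: for a fixed integer $h \neq 0$, apply $(2)$ to the real and imaginary parts of the continuous function $x \mapsto e^{2\pi i h x}$, observing that $e^{2\pi i h x_j} = e^{2 \pi i h (x_j \bmod 1)}$ since $h$ is an integer, and that $\int_0^1 e^{2\pi i h x}\, dx = 0$.

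The substance of the argument is $(3) \Rightarrow (1)$, which I expect to be the main obstacle. Condition $(3)$, together with the trivial case $h = 0$ (where the average equals $1 = \int_0^1 1$), shows that $\frac{1}{n}\sum_{j<n} p(x_j \bmod 1) \to \int_0^1 p$ for every trigonometric polynomial $p$. By the Weierstrass approximation theorem every continuous function on the circle---that is, every continuous $f \colon [0,1] \to \RR$ with $f(0) = f(1)$---is a uniform limit of trigonometric polynomials, so a routine three-$\varepsilon$ estimate extends this convergence to all such $f$. Finally, given an interval $I$, I would sandwich $\mathbf{1}_I$ between continuous periodic functions $f^- \le \mathbf{1}_I \le f^+$ with integrals within $\varepsilon$ of $\lambda(I)$, pass to the corresponding inequalities for the averages, and let $\varepsilon \to 0$ to recover $(1)$. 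The delicate points here are keeping the approximating functions periodic so that Weierstrass applies, and controlling the $\limsup$ and $\liminf$ of the averages uniformly in $n$ throughout the sandwiching.
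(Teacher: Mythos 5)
Your proposal is correct and follows exactly the route the paper itself sketches: $(1)\Rightarrow(2)$ by sandwiching continuous functions between step functions (Riemann-sum approximation), $(2)\Rightarrow(3)$ immediately, and $(3)\Rightarrow(1)$ by approximating the indicator of an interval with trigonometric polynomials via Weierstrass/Fej\'er. The paper defers the details to the cited references, and your write-up supplies those standard details faithfully.
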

The third property is known as ``the Weyl criterion.'' Roughly, the second follows from the first using Riemann sums to approximate the integral; the third follows immediately from the second; and the first follows from the third, using trigonometric sums to approximate the characteristic function of an interval. For details, see any of the references mentioned above.

It will be convenient to adopt the Vinogradov convention of writing $e(x)$ for $e^{2 \pi i x}$, thus freeing up the symbol $i$ to be used as an index. Note that $\overline{e(x)} = e(-x)$ and $e(x + y) = e(x)e(y)$ for every $x$ and $y$. Theorem~\ref{main:thm} is an immediate consequence of the next lemma, since if $x$ is a Schnorr random element of $[0,1]$ and $(a_j)_{j \in \NN}$ is any computable sequence of distinct integers, the lemma implies that $(a_j x)_{j \in \NN}$ is uniformly distributed modulo one.

\begin{lemma}
\label{main:lemma}
 For each $i$, let $(a^i_j)_{j \in \NN}$ be a sequence of distinct integers, such that $a^i_j$ is computable from $i$ and $j$. Then there is a Schnorr test $C$ such that for every $x$ not in $C$ and every integer $i$, $(a^i_j x)_{j \in \NN}$ is uniformly distributed modulo one.  
\end{lemma}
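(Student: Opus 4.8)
The plan is to verify the Weyl criterion (clause (3) of the preceding lemma) for all the sequences $(a^i_j x)_j$ simultaneously, outside a single Schnorr null set. Fix $i$ and a nonzero integer $h$, and for $x \in [0,1]$ abbreviate $\sigma_n(x) = \frac{1}{n} \sum_{j < n} e(h a^i_j x)$, suppressing the indices $i, h$; it suffices to force $\sigma_n(x) \to 0$ for every $i$ and every $h \neq 0$ at every $x$ outside the test. The engine of the argument is the elementary $L^2$ identity
\[
\int_0^1 |\sigma_n(x)|^2 \, dx = \frac{1}{n^2} \sum_{j,k < n} \int_0^1 e\bigl(h(a^i_j - a^i_k)x\bigr)\,dx = \frac{1}{n},
\]
which uses that $\int_0^1 e(mx)\,dx$ vanishes unless $m = 0$, together with the hypotheses that the $a^i_j$ are distinct and $h \neq 0$. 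By Chebyshev's inequality this yields the quantitative and, crucially, \emph{effective} bound $\lambda\{x : |\sigma_{m^2}(x)| > \varepsilon\} \le 1/(\varepsilon^2 m^2)$ along the sparse sequence of perfect squares.

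Two further ingredients complete the pointwise analysis. First, restricting to $n = m^2$ makes the bounds summable, since $\sum_m 1/(\varepsilon^2 m^2) < \infty$. Second, the full sequence is recovered from the squares by a trivial gap-filling estimate: for $m^2 \le n < (m+1)^2$ the difference $n\sigma_n(x) - m^2 \sigma_{m^2}(x)$ is a sum of at most $2m+1$ terms of modulus one, so $|\sigma_n(x)| \le |\sigma_{m^2}(x)| + (2m+1)/m^2$, and hence $\sigma_n(x) \to 0$ whenever $\sigma_{m^2}(x) \to 0$. It is therefore enough to build a test that forces $\sigma_{m^2}(x) \to 0$ for all $i$ and all $h \neq 0$.

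To assemble the test, I would fix a computable bijection $q \mapsto (i_q, h_q, l_q)$ onto triples with $h_q \neq 0$ and $l_q \ge 1$, and for each level $k$ set
\[
G_k = \bigcup_q \; \bigcup_{m \ge M(q,k)} \bigl\{\, x \in [0,1] : |\sigma^{i_q,h_q}_{m^2}(x)| > 1/l_q \,\bigr\},
\]
restoring the suppressed indices and choosing the computable cutoff $M(q,k)$ so that the Chebyshev bound gives $\sum_{m \ge M(q,k)} l_q^2/m^2 \le 2^{-k-q-1}$; summing over $q$ then forces $\lambda(G_k) \le 2^{-k}$. If $x \notin \bigcap_k G_k$ then $x \notin G_{k_0}$ for some $k_0$, which says precisely that $|\sigma^{i_q,h_q}_{m^2}(x)| \le 1/l_q$ for all $m \ge M(q,k_0)$ and every triple $q$. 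For a fixed pair $(i,h)$ and each $l$ the corresponding triple shows $|\sigma^{i,h}_{m^2}(x)| \le 1/l$ for all large $m$, whence $\sigma^{i,h}_{m^2}(x) \to 0$; the gap-filling estimate then delivers uniform distribution of $(a^i_j x)_j$ for every $i$, as required.

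The one point that demands genuine care — and the only place where Schnorr, as opposed to mere Martin-L\"of, randomness is at stake — is checking that each $\lambda(G_k)$ is \emph{computable}, not just effectively bounded by $2^{-k}$. The sets $\{|\sigma_{m^2}| > 1/l\}$ are superlevel sets of explicit trigonometric polynomials whose exact measures need not be computable, so I would not evaluate them directly. Instead, since each $\sigma^{i,h}_{m^2}$ is a computable function with a computable modulus of continuity, I would represent $G_k$ as a union of rational intervals, covering each superlevel set from outside finely enough that the total interval length exceeds the Chebyshev bound by at most a prescribed amount. The resulting series of interval lengths then converges effectively, and for such a union the measure can be squeezed between its approximations from below (it is left-c.e.\ uniformly in $k$) and the effectively vanishing tail of the length series from above; hence $\lambda(G_k)$ is computable uniformly in $k$, and $(G_k)$ is a genuine Schnorr test. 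This book-keeping — making the covering effective while keeping its total length summable with a computable bound — is the main obstacle, in contrast to the harmonic-analytic core, which is entirely standard.
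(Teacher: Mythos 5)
Your proof is correct and follows essentially the same route as the paper's: the same $L^2$ identity and Chebyshev/Markov estimate, the same restriction to the subsequence of perfect squares with the same gap-filling bound, and the same assembly of the test from tail sets whose Chebyshev bounds are forced to sum to $2^{-k}$ by computable cutoffs. There are two points of divergence, both in bookkeeping rather than substance. First, you carry the Weyl multiplier $h$ explicitly through the enumeration of triples $(i,h,l)$, whereas the paper closes the family of sequences under multiplication (so that $(h a^i_j)_j$ appears as some $(a^{i'}_j)_j$) and then enumerates only pairs $(i,\varepsilon)$; these are interchangeable. Second, and more interestingly, you establish computability of $\lambda(G_k)$ by fattening each superlevel set to an explicit finite union of rational intervals whose total length is controlled by Chebyshev at a slightly lower threshold, so that $G_k$ becomes an interval union with effectively summable lengths; the paper instead asserts directly that the finite unions $\bigcup_{j\le v} A_{i_j,\varepsilon_j,m_{j,k}}$ have computable measure and bounds the tail by $2^{-(k+v)}$. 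Your version is self-contained: it sidesteps the question of whether the sets $\{|S^i_{n^2}| > \varepsilon\}$ have exactly computable measure, which the paper's assertion implicitly requires (it does hold, because a nonconstant trigonometric polynomial takes each value only finitely often, so the level sets are null, but the paper never says this). The price is only the minor care, which you acknowledge, of readjusting constants so the enlarged covers still have total measure at most $2^{-k}$.
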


\begin{proof}
For the moment, fix a sequence $(a_i)$ of distinct integers, and for any $x$ in $[0,1]$ write
\[
S_n(x) = \frac{1}{n}\sum_{j<n} e(a_j x). 
\]
Then we have 
\[
 |S_n(x)|^2 = S_n(x) \overline{S_n(x)} = \frac{1}{n^2} \sum_{j,k < n} e((a_j - a_k)x),
\]
and hence
\[
\int_0^1 |S_n(x)|^2 \; dx = \frac{1}{n^2} \sum_{j,k<n} \int_0^1 e((a_j - a_k)x) \; dx = \frac{1}{n},
\]
since each term in the sum is $0$ except when $j = k$. (A more perspicuous way of carrying out this calculation is to notice that the left hand side is the square $\| S_n \|_2^2$ of the norm of $S_n$ in the Hilbert space $L^2([0,1])$, and the functions $e(a_0 x), e(a_1 x), \ldots$ form an orthornormal set.) Markov's inequality implies that for every $\varepsilon > 0$ we have
\[
 \lambda(\{ x \st |S_n(x)| > \varepsilon\}) =  \lambda(\{ x \st |S_n(x)|^2 > \varepsilon^2\}) \leq \frac{1}{n \varepsilon^2}.
\]
In other words, for $n$ large, $S_n(x)$ is small for most $x$. 

Now fix a doubly-indexed sequence $(a^i_j)$ as in the statement of the lemma, and for each $i$ write
\[
S^i_n(x) = \frac{1}{n}\sum_{j<n} e(a^i_j x). 
\]
Without loss of generality we can assume that for every integer $h > 1$ and $i$ the sequence $(a^i_j h)_{j \in \NN}$ already appears as one of the sequences $(a^{i'}_j)_{j \in \NN}$ for some $i'$, since we can replace the original sequence of sequences with a sequence that includes all such multiples. By the Weyl criterion, then, it suffices to find a Schnorr test $C$ such that for each $x$ not in $C$ we have $\lim_{n \to \infty} S^i_n(x) = 0$. 

A simple calculation shows that for any $m$ such that $n^2 \leq m < (n+1)^2$ we have $|S_m(x)| \leq |S_{n^2}(x)| + 2 / {\sqrt n}$; in other words, between $n^2$ and $(n + 1)^2$, the averages do not change that much. This reduces our task to finding a Schnorr test $C$ such that for each $x$ not in $C$ we have $\lim_{n \to \infty} S^i_{n^2}(x) = 0$.

For each $i$, rational $\varepsilon > 0$, and $n$ define
\[
 A_{i, \varepsilon, m} = \{ x \st \ex{n \geq m} |S^i_{n^2}(x)| > \varepsilon \}.
\]
By the calculation above, we have
\[
 \lambda(A_{i, \varepsilon, m}) \leq \sum_{n \geq m} \lambda(\{x \st |S^i_{n^2}(x)| > \varepsilon \}) \leq \sum_{n \geq m} \frac{1}{n^2 \varepsilon^2}, 
\]
which decreases to $0$ as $m$ approaches infinity. Choose an enumeration $(i_j, \varepsilon_j)$ of all pairs $(i,\varepsilon)$, and for each $k$ and $j$ choose $m_{j,k}$ large enough so that $\lambda(A_{i_j,\varepsilon_j, m_{j,k}}) < 2^{-(j + k + 1)}$. For each $k$, let
\[
 G_k = \bigcup_j A_{i_j,\varepsilon_j,m_{j,k}}.
\]
Then for each $k$, $\lambda(G_k) \leq \sum_j 2^{-(j + k + 1)} = 2^{-k}$. Moreover, the measure of $G_k$ is uniformly computable in $k$, since for each $v$ the measure of $\bigcup_{j \leq v} A_{i_j,\varepsilon_j,m_{j,k}}$ is computable and the measure of $\bigcup_{j > v} A_{i_j,\varepsilon_j,m_{j,k}}$ is at most $\sum_{j > v} 2^{-(j + k + 1)} = 2^{-(k + v)}$. Thus the sequence $(G_k)$ is a Schnorr test, and we need only confirm that it meets the specification above. 

So suppose $x$ passes this test, that is, for some $k$, $x$ is not in $G_k$. Given any $i$ and $\varepsilon > 0$, choose $j$ such that $(i, \varepsilon) = (i_j, \varepsilon_j)$. Since $x$ is not in $G_k$, it is not in $A_{i_j,\varepsilon_j,m_{j,k}}$. This means that for every $n \geq m_{j,k}$, we have $|S^i_{n^2}(x)| \leq \varepsilon$. Since $i$ and $\varepsilon > 0$ were arbitrary, we have that $S^i_{n^2}(x)$ approaches $0$ for every $i$, as required.
\end{proof}

It is well known that given any Schnorr test, one can find a computable real that passes that test. (See the proof of Proposition 7.1.11 in \cite{downey:hirschfeldt:10} or the discussion after Definition 3.5.8 in \cite{nies:09}.) As result, the preceding lemma also has the following nice consequence, to the effect that we can actually \emph{compute} a real number that is UD random with respect to a computable list of computable sequences.

\begin{theorem}
For every uniformly computable sequence of sequences $(a^i_j)_{j \in \NN}$ of distinct integers there is a computable element $x$ of $[0,1]$ that is UD random for these sequences; that is, such that $(a^i_j x)_{j \in \NN}$ is uniformly distributed modulo one for each $i$.  
\end{theorem}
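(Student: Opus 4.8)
The plan is to combine Lemma~\ref{main:lemma} with the fact recalled just above the theorem—that every Schnorr test is passed by some computable real—since together these two ingredients yield the conclusion almost immediately. Given a uniformly computable sequence of sequences $(a^i_j)_{j \in \NN}$ of distinct integers, the first step is simply to feed it into Lemma~\ref{main:lemma}, obtaining a Schnorr test $C = (G_k)$ with the property that every $x$ not in $\bigcap_k G_k$ satisfies that $(a^i_j x)_{j \in \NN}$ is uniformly distributed modulo one for each $i$. The entire analytic burden of uniform distribution is thereby discharged by the earlier lemma, and what remains is purely the computability-theoretic task of producing a point that passes $C$.

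For the second step I would invoke the cited fact to obtain a computable real $x$ that passes the Schnorr test $C$, that is, with $x \notin \bigcap_k G_k$. Such an $x$ is in particular ``not in $C$'' in the sense of Lemma~\ref{main:lemma}, so the conclusion of that lemma applies to it, and $(a^i_j x)_{j \in \NN}$ is uniformly distributed modulo one for every $i$. This is exactly the assertion of the theorem, so once the two ingredients are in hand no further argument is required.

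The one place where genuine work is hidden is the construction of the computable passing point, which is where I expect the main difficulty to lie and where the Schnorr hypothesis—rather than merely the Martin-L\"of one—is essential. The idea is to fix a single level $G_k$ with $\lambda(G_k) \leq 2^{-k} < 1$ and to build $x$ as the intersection of a computable nested sequence of closed rational intervals $I_0 \supseteq I_1 \supseteq \cdots$ whose lengths tend to $0$, each chosen so that $G_k$ occupies at most, say, half of $I_s$ and hence $I_s \setminus G_k$ retains positive measure. Because $\lambda(G_k)$ is computable, at each stage one can effectively truncate the defining union $\bigcup_i (a_i, b_i)$ of $G_k$ to a finite subunion capturing all but an arbitrarily small amount of its measure, locate within the current interval a subinterval of positive measure disjoint from that finite subunion, and bound the still-uncovered tail; iterating drives the uncovered tail to $0$, so the limit point $x$ lies outside $G_k$. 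It is precisely the computability of $\lambda(G_k)$, which fails for a general Martin-L\"of test, that makes each of these steps effective and the resulting $x$ computable. For the full details I would simply refer the reader to the proof of Proposition 7.1.11 in \cite{downey:hirschfeldt:10} or the discussion following Definition 3.5.8 in \cite{nies:09}.
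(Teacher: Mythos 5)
Your proposal is correct and matches the paper's own argument exactly: the paper likewise derives the theorem by combining Lemma~\ref{main:lemma} with the cited fact that every Schnorr test is passed by some computable real (Proposition 7.1.11 in \cite{downey:hirschfeldt:10}, or the discussion after Definition 3.5.8 in \cite{nies:09}). Your additional sketch of why computability of $\lambda(G_k)$ makes the passing point computable is a faithful outline of that cited construction, so nothing is missing.
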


\section{UD random reals with nonrandom properties}
\label{ud:kurtz:section}

The main result of this section is that there are UD random reals that are not Kurtz random. In fact, I prove something stronger. Ignoring the countable set of dyadic rationals, every element $x$ of $[0,1]$ can be uniquely identified with its binary representation, or, equivalently, the element of Cantor space, $2^\NN$, corresponding to the sequence of bits after the (binary) decimal point.  Let $(x)_i$ denote the $i$th binary digit of $x$, and let
\[
 C = \{ x \st \mbox{for every $n$, $(x)_{2^{2n}} = (x)_{2^{2n+1}}$} \},
\]
that is, the set of binary sequences such that for every $n$, the $2^{2n}$-th digit is the same as the $2^{2n+1}$-st digit. (Here and below it is convenient to start the indexing with $1$, so that the first bit of $x$ is $(x)_1$ rather than $(x)_0$.) Clearly $C$ is an effectively closed null set. There is a natural bijection $f$ from $2^\NN$ to $C$ which takes any infinite binary sequence and inserts the appropriate digits at positions $2^{2n+1}$ for each $n$. Let $\mu$ be the ``uniform'' probability measure on $C$ defined by setting $\mu(A) = \lambda(f^{-1}(A))$, where $A$ is any Borel subset of $C$ and $\lambda$ is the uniform measure on $2^\NN$. The following asserts that the analogue of Weyl's theorem holds for this measure on $C$:

\begin{theorem}
\label{main:kurtz:thm}
Let $(a_n)$ be any sequence of distinct integers. Then for every almost every element $x$ with respect to $\mu$, $(a_n x)$ is uniformly distributed modulo one. 
\end{theorem}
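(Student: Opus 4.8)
The plan is to follow the template of the proof of Lemma~\ref{main:lemma}, replacing the exact orthogonality of the exponentials in $L^2([0,1])$ by a quantitative estimate for their correlations with respect to $\mu$. By the Weyl criterion it suffices to show, for each nonzero integer $h$, that $\frac1N\sum_{n<N}e(ha_nx)\to0$ for $\mu$-almost every $x$; since $(ha_n)$ is again a sequence of distinct integers, it is enough to prove that for an \emph{arbitrary} sequence of distinct integers $(a_n)$ one has $S_N(x)\to0$ $\mu$-almost everywhere, where $S_N(x)=\frac1N\sum_{n<N}e(a_nx)$, and then take a countable union over $h$. Mirroring Lemma~\ref{main:lemma}, I would establish a bound on $\int_0^1|S_N|^2\,d\mu$ that is summable along the subsequence $N=n^2$, apply Markov's inequality and the Borel--Cantelli lemma to get $S_{n^2}\to0$ $\mu$-almost everywhere, and then fill the gaps using the measure-independent estimate $|S_m|\le|S_{n^2}|+2/\sqrt n$ from the proof of Lemma~\ref{main:lemma}.

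So the whole problem reduces to estimating
\[
 \int_0^1|S_N|^2\,d\mu=\frac1{N^2}\sum_{j,k<N}\widehat\mu(a_j-a_k),\qquad \widehat\mu(m):=\int_0^1 e(mx)\,d\mu(x).
\]
The first step is to compute $\widehat\mu$. Under $\mu$ the real $x$ is the random sum $\sum_p\beta_p w_p$, where the $\beta_p$ are independent fair bits indexed by the ``free'' positions $p$ (all $p\ge1$ except the forced positions $2^{2n+1}$), and the weights are $w_p=2^{-p}$ except that $w_{2^{2n}}=2^{-2^{2n}}+2^{-2^{2n+1}}$. Hence $\widehat\mu(m)=\prod_p\frac{1+e(mw_p)}2=(-1)^m\prod_p\cos(\pi mw_p)$ (the products over free positions $p$), using $\sum_p w_p=1$. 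Two features of this formula drive everything. First, $\widehat\mu(m)=0$ unless $v_2(m)+1$ is a power of $2$, where $v_2$ is the $2$-adic valuation: otherwise the factor at the ordinary position $p=v_2(m)+1$ vanishes, exactly the mechanism behind the computation $\int_0^1|S_n|^2\,d\lambda=1/n$. Second, although $|\widehat\mu(m)|$ is small for ``most'' $m$ (of size roughly $|m|^{-c}$ when the binary expansion of $m$ is balanced), it does \emph{not} tend to $0$: along $m=2^K-1$ it stays bounded away from $0$. Thus the clean orthogonality of Lemma~\ref{main:lemma} genuinely fails, and the diagonal bound $1/N$ acquires a nontrivial off-diagonal correction.

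To control that correction I would exploit that $\mu$ behaves like Lebesgue measure up to polylogarithmic factors at every dyadic scale: a consistent cylinder of length $L$ has $\mu$-mass $2^{-L+O(\log L)}$, since only $L-O(\log L)$ of the first $L$ positions are free. Splitting the frequencies $a_n$ into those of size at most $2^L$ and the rest, the low-frequency part is essentially constant on scale-$2^{-L}$ cylinders, so its contribution is at most $(\log N)^{O(1)}$ times the corresponding Lebesgue quantity and is handled by the orthogonality already used in Lemma~\ref{main:lemma}, giving a bound of the form $(\log N)^{O(1)}/N$. The main obstacle is the high-frequency part, precisely because $\widehat\mu$ does not decay; this is also the entire difficulty when $(a_n)$ grows rapidly, since then every frequency is high. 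Here one must show that the pairs $(j,k)$ with $|\widehat\mu(a_j-a_k)|$ not small contribute only $o(N^2)$. The point is that the frequencies $m$ with $|\widehat\mu(m)|\ge\delta$ form a very thin, $2$-adically structured set (their binary expansions have boundedly many blocks of constant digits), so that, using the distinctness of the $a_n$ together with the sign $(-1)^{a_j-a_k}$ to produce cancellation, such pairs are too few to matter. Making this last estimate work uniformly for rapidly growing sequences $(a_n)$ is the delicate heart of the argument.

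Granting the resulting bound $\int_0^1|S_{n^2}|^2\,d\mu=o(1)$ at a rate summable in $n$, the Borel--Cantelli and gap-filling steps go through as in Lemma~\ref{main:lemma}, yielding $S_N\to0$ $\mu$-almost everywhere for the fixed sequence; taking the countable union over $h$ and invoking the Weyl criterion then completes the proof.
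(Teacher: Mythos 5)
Your skeleton---the Weyl criterion, the identity $\int_0^1|S_N|^2\,d\mu=\frac{1}{N^2}\sum_{j,k<N}\hat\mu(a_j-a_k)$, Markov plus Borel--Cantelli along a subsequence, and gap-filling---is the right shape, and it is essentially how the paper proceeds, except that the paper packages the $L^2$-to-almost-everywhere step into the cited Davenport--Erd\H{o}s--LeVeque criterion and Lyons's Lemma~\ref{lyons:lemma}: if $\sum_{u\geq 1}|\hat\mu(u)|/u$ converges, the conclusion holds for \emph{every} sequence of distinct integers. The entire substance of the paper's proof is therefore a single estimate, $|\hat\mu(u)|=O(1/\sqrt u)$, established by an explicit computation over the dyadic structure of $C$.

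Your proposal breaks down exactly at that point. The central structural claim you make---that $\hat\mu$ does not tend to zero, staying bounded away from $0$ along $m=2^K-1$---is false, and your own product formula refutes it: whenever $K+1$ is not a power of two, $p=K+1$ is a free ordinary position with weight $2^{-(K+1)}$, and since $(2^K-1)2^{-(K+1)}=\tfrac{1}{2}-2^{-(K+1)}$, that single factor has modulus $\cos\bigl(\pi(\tfrac{1}{2}-2^{-(K+1)})\bigr)=\sin\bigl(\pi 2^{-(K+1)}\bigr)\leq \pi 2^{-(K+1)}$, whence $|\hat\mu(2^K-1)|=O(1/m)$, all other factors having modulus at most $1$. (When $K+1$ is a forced position $2^{2n+1}$, the small factor comes instead from the doubled position $2^{2n}=(K+1)/2$ and has size roughly $\pi 2^{-(K+1)/2}$; this $O(1/\sqrt m)$ case is the worst one, and is exactly where the square root in the paper's bound comes from.) The super-exponential sparseness of the forced positions is precisely what makes $\hat\mu$ decay; there is no non-decaying high-frequency regime, so the ``delicate heart'' of your argument---cancellation over pairs $(j,k)$ with $|\hat\mu(a_j-a_k)|$ not small, which you acknowledge you have not carried out---is aimed at a nonexistent difficulty and is, in any case, the missing proof. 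Note also that once $|\hat\mu(m)|\leq C|m|^{-1/2}$ is actually proved, your plan closes with no delicacy at all: for fixed $j$ each positive integer occurs at most twice among the values $|a_j-a_k|$, $k\neq j$, so $\sum_{k\neq j,\,k<N}|a_j-a_k|^{-1/2}\leq 2\sum_{d\leq N}d^{-1/2}=O(\sqrt N)$, giving $\int_0^1|S_N|^2\,d\mu=O(N^{-1/2})$ uniformly in the sequence; this is summable along $N=n^4$ (your choice $N=n^2$ is too sparse for this weaker bound, but the same gap-filling estimate works between consecutive fourth powers), or it can be fed directly into the Davenport--Erd\H{o}s--LeVeque criterion. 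In short: right architecture, but the one estimate that constitutes the proof is both misstated (as a false non-decay claim) and left unproven.
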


This implies that given any countable collection of sequences of distinct integers, in particular all the computable ones, almost every element $x$ of $C$ is UD random with respect to this collection. I am grateful to Terence Tao for suggesting this approach.

We will prove Theorem~\ref{main:kurtz:thm} as follows. The values $\hat \mu(u) = \int_C e(u x) \; d\mu(x)$ are known as the \emph{Fourier-Stieltjes coefficients} of the measure $\mu$, and the proof of Weyl's theorem presented in Section~\ref{results:section} relied on the fact that the Fourier-Stieltjes coefficients $\hat \lambda(u)$ of Lebesgue measure are $0$ when $u \neq 0$. In fact, the next two lemmas show that for the conclusion of Weyl's theorem to hold with respect to a measure $\mu$ it suffices to show that the Fourier-Stieltjes coefficients of $\mu$ approach $0$ sufficiently quickly. After stating this criterion precisely, we will then show that it is satisfied by the measure $\mu$ at hand. 

First, the proof of Weyl's theorem can be modified to obtain the following result due to Davenport, Erd\H{o}s, and LeVeque \cite{davenport:erdos:leveque:63}:
\begin{lemma}
Fix a finite measure $\mu$ on $[0,1]$, and let $s_n(x)$ be a sequence of bounded, $\mu$-integrable functions. Let $S^m_n(x) = \frac{1}{n} \sum_{j <n} e(m s_n(x))$. If the series
\[
 \sum_{n = 1}^\infty \frac{1}{n} S^m_n(x)
\]
converges for every $m \neq 0$, then the sequence $(s_n(x))$ is uniformly distributed modulo one for $\mu$-almost every $x$.
\end{lemma}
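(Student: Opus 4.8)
The plan is to reduce the claim to the Weyl criterion (clause~(3) of the equivalence lemma) together with a single deterministic fact about Cesàro averages. By that criterion, $(s_n(x))$ is uniformly distributed modulo one precisely when $S^m_n(x) \to 0$ as $n \to \infty$ for every integer $m \neq 0$. Since there are only countably many such $m$, and a countable union of $\mu$-null sets is $\mu$-null, it suffices to fix a single $m \neq 0$ and produce a $\mu$-conull set of $x$ on which $S^m_n(x) \to 0$. Throughout, I would write $c_j = e(m s_j(x))$, so that $|c_j| = 1$ and $S^m_n(x) = \frac{1}{n} \sum_{j<n} c_j$ is a Cesàro average of unit vectors.

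First I would pass from the integrated hypothesis to a pointwise statement. Each $S^m_n$ is $\mu$-measurable and satisfies $|S^m_n(x)| \leq 1$, so the nonnegative function $x \mapsto \sum_{n \geq 1} \frac{1}{n} |S^m_n(x)|^2$ is measurable, and by Tonelli its $\mu$-integral equals $\sum_{n \geq 1} \frac{1}{n} \int |S^m_n|^2 \, d\mu$, which is finite by hypothesis. An integrable nonnegative function is finite almost everywhere, so for $\mu$-almost every $x$ the series $\sum_{n \geq 1} \frac{1}{n} |S^m_n(x)|^2$ converges.

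The heart of the matter is the deterministic lemma: if $(c_j)$ is any sequence of complex numbers with $|c_j| \leq 1$ and $T_n = \frac{1}{n} \sum_{j<n} c_j$, then convergence of $\sum_n \frac{1}{n} |T_n|^2$ forces $T_n \to 0$. I expect this to be the main obstacle, since it is exactly the step that controls the \emph{whole} sequence $(n)$ under only the logarithmically weighted summability $\sum_n \frac{1}{n} |T_n|^2 < \infty$, rather than forcing a passage to a sparse subsequence as in the proof of Lemma~\ref{main:lemma}. The mechanism is the near-Lipschitz behaviour of the partial sums $\sigma_n = \sum_{j<n} c_j$: consecutive sums differ by $|c_n| \leq 1$, so $|T_n|$ cannot drop abruptly. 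Concretely, if $T_n \not\to 0$, I would fix $\delta > 0$ and a rapidly growing sequence $n_1 < n_2 < \cdots$ with $n_{k+1} > 2 n_k$ and $|T_{n_k}| > \delta$, that is, $|\sigma_{n_k}| > \delta n_k$. For $i$ in the block $n_k \leq i \leq n_k + \lfloor \delta n_k / 2 \rfloor$ one gets $|\sigma_i| \geq \delta n_k - (i - n_k) \geq \delta n_k / 2$ while $i \leq 2 n_k$, hence $|T_i| \geq \delta/4$; since $\sum_i \frac{1}{i}$ over this block (of length about $\delta n_k / 2$ inside $[n_k, 2 n_k]$) is at least $\delta/4$, the block contributes at least $(\delta/4)^2 \cdot (\delta/4) = \delta^3/64$ to $\sum_i \frac{1}{i} |T_i|^2$. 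The blocks are disjoint because $n_{k+1} > 2 n_k$, so summing over $k$ makes $\sum_i \frac{1}{i} |T_i|^2$ diverge, contradicting convergence.

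Finally I would assemble the pieces. For each fixed $m \neq 0$, the previous two paragraphs give a $\mu$-conull set $E_m$ on which $\sum_n \frac{1}{n} |S^m_n(x)|^2 < \infty$, whence by the deterministic lemma (applied with $c_j = e(m s_j(x))$) we have $S^m_n(x) \to 0$ for $x \in E_m$. Setting $E = \bigcap_{m \neq 0} E_m$, which is still $\mu$-conull, we obtain $S^m_n(x) \to 0$ for all $m \neq 0$ and all $x \in E$, so by the Weyl criterion $(s_n(x))$ is uniformly distributed modulo one for every $x \in E$, that is, for $\mu$-almost every $x$. The only nontrivial ingredient is the deterministic lemma; the reduction to it via Tonelli and the Weyl criterion is routine.
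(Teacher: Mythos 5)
Your proof is correct, but there is no proof in the paper to compare it against: the paper quotes this lemma as the Davenport--Erd\H{o}s--LeVeque theorem and defers the proof to the literature (Kuipers--Niederreiter, Theorem 4.2). What you wrote is essentially that standard argument: Tonelli/monotone convergence converts the integrated hypothesis into finiteness of $\sum_n \frac{1}{n}|S^m_n(x)|^2$ for $\mu$-almost every $x$; then the near-Lipschitz property of partial sums of unit vectors ($|\sigma_{i+1}-\sigma_i|\leq 1$) shows that $|T_{n_k}|>\delta$ forces $|T_i|\geq \delta/4$ on the whole multiplicative block $n_k\leq i\leq n_k(1+\delta/2)$, so each such block costs at least $\delta^3/64$ in the logarithmically weighted series, and infinitely many disjoint blocks contradict convergence; finally one intersects over the countably many $m$ and applies the Weyl criterion. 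Your block estimates ($i \leq 2n_k$, each term at least $1/(2n_k)$, contribution at least $(\delta/4)^2\cdot(\delta/4)$, disjointness from $n_{k+1}>2n_k$) all check out. This is exactly the deterministic core of the cited proof, so your write-up in effect supplies the argument the paper leaves to its references.

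One thing you did silently that deserves an explicit comment: the hypothesis as printed reads ``$\sum_n \frac{1}{n} S^m_n(x)$ converges,'' with no integral sign and no modulus squared (and with the typo $e(m s_n(x))$ where $e(m s_j(x))$ is meant in the definition of $S^m_n$). You read it as $\sum_n \frac{1}{n} \int |S^m_n|^2\,d\mu < \infty$, which is indeed what Davenport--Erd\H{o}s--LeVeque assume, what Kuipers--Niederreiter state, and what the paper's intended application (Lemma~\ref{lyons:lemma}, where one bounds $\int |S^m_n|^2\,d\mu$ by Fourier--Stieltjes coefficients) actually supplies. So your reading is the right one, but strictly speaking you proved a statement with a corrected hypothesis: under the literal pointwise reading, convergence of the complex series $\sum_n \frac{1}{n} T_n$ does not obviously force $T_n \to 0$, even for Ces\`aro averages, and your block argument would not apply as stated since it uses nonnegativity of the summands. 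In a careful write-up you should flag this as a typo in the statement rather than repair it without remark.
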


The proof can also be found in Kuipers and Niederreiter \cite[Theorem 4.2]{kuipers:niederreiter:74}. (In both places, the result is stated for Lebesgue measure, but the proof establishes the more general result. See also Lo\`eve \cite{loeve:78} for a similar argument in a slightly different context.) Lyons~\cite[Theorem 7]{lyons:86} uses this fact to establish the following strengthening of Weyl's theorem:

\begin{lemma}
\label{lyons:lemma}
If the Fourier-Stieltjes coefficients of $\mu$ have the property that
\[
 \sum_{u=1}^\infty \frac{|\hat \mu(u)|}{u}
\]
converges, then for any sequence $(a_n)$ of distinct integers and $\mu$-almost every $x$, the sequence $(a_n x)$ is uniformly distributed modulo one.
\end{lemma}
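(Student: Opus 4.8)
The plan is to deduce this lemma from the preceding one (the Davenport--Erd\H{o}s--LeVeque criterion), applied to the functions $s_n(x) = a_n x$. With this choice, and writing $S^m_N(x) = \frac{1}{N}\sum_{j<N} e(m a_j x)$ as before, Lyons' lemma will follow once the hypothesis of the previous lemma is verified: namely, that for every integer $m \neq 0$,
\[
 \sum_{N=1}^\infty \frac{1}{N} \int_0^1 |S^m_N(x)|^2 \, d\mu(x) < \infty.
\]
So the whole task reduces to establishing this convergence from the assumption that $\sum_{u \geq 1} |\hat\mu(u)|/u$ converges.

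First I would expand the inner integral exactly as in the proof of Lemma~\ref{main:lemma}, except that now $\mu$ replaces Lebesgue measure:
\[
 \int_0^1 |S^m_N(x)|^2 \, d\mu(x) = \frac{1}{N^2}\sum_{j,k<N} \int_0^1 e(m(a_j - a_k)x)\, d\mu(x) = \frac{1}{N^2}\sum_{j,k<N} \hat\mu(m(a_j - a_k)).
\]
This is precisely where the Fourier--Stieltjes coefficients enter: for Lebesgue measure every off-diagonal term vanishes, but for a general $\mu$ we must instead control their sizes. The diagonal terms ($j = k$) contribute $\hat\mu(0)/N$, and these are harmless, since $\sum_N \frac{1}{N}\cdot\frac{1}{N}$ converges. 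Using the triangle inequality together with $|\hat\mu(-u)| = |\hat\mu(u)|$, the off-diagonal contribution is bounded by a constant multiple of
\[
 \sum_{N=1}^\infty \frac{1}{N^3} \sum_{d \geq 1} r_N(d)\, |\hat\mu(md)|,
\]
where $r_N(d)$ denotes the number of pairs $j,k < N$ with $a_j - a_k = d$.

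The heart of the argument is then to convert this double sum into the series $\sum_{u \geq 1} |\hat\mu(u)|/u$ appearing in the hypothesis. Here I would interchange the order of summation, carrying out the sum over $N$ first: a pair realizing the difference $d$ with larger index $M$ contributes $\sum_{N > M} N^{-3} \asymp M^{-2}$, so the expression takes the form $\sum_{d \geq 1} |\hat\mu(md)|\, w(d)$, where $w(d) \asymp \sum \max(j,k)^{-2}$, the sum being over pairs with $a_j - a_k = d$. The crucial fact to exploit is that the $a_n$ are \emph{distinct} integers, so the differences arising among $a_0, \ldots, a_{N-1}$ cannot cluster: for each threshold $D$, only $O(ND)$ of the pairs have difference of absolute value at most $D$. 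Feeding this counting bound into the interchanged sum is what produces the harmonic weights $1/d$, matching each $|\hat\mu(md)|$ and thereby bounding the entire expression in terms of $\sum_{u \geq 1} |\hat\mu(u)|/u$, which converges by hypothesis.

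I expect this last combinatorial conversion to be the main obstacle. The naive termwise estimate $w(d) = O(1/d)$ is \emph{false} — for instance when the $a_n$ form an arithmetic progression with large common difference, or grow polynomially — so the bound cannot be obtained one difference at a time; one must use the global distribution of the multiset of differences together with the decay encoded in $\sum_u |\hat\mu(u)|/u$ (and the trivial bound $|\hat\mu(u)| \leq \hat\mu(0)$) simultaneously. Once this estimate is in hand, summability of the full series follows, and the Davenport--Erd\H{o}s--LeVeque criterion then delivers uniform distribution of $(a_n x)$ modulo one for $\mu$-almost every $x$.
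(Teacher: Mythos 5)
Your opening moves are correct, and they follow the same route as the proof the paper relies on: the paper does not prove this lemma itself, but cites Lyons' Theorem 7, whose proof is exactly your plan --- apply the Davenport--Erd\H{o}s--LeVeque criterion to $s_n(x) = a_n x$ (your reading of that criterion as requiring $\sum_N \frac{1}{N}\int |S^m_N|^2\,d\mu < \infty$ is the right one, despite the misprint in the paper's statement of it), and expand $\int |S^m_N|^2\, d\mu = \frac{1}{N^2}\sum_{j,k<N}\hat\mu(m(a_j-a_k))$.

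The step you defer to the end, however, is not merely ``the main obstacle'': it cannot be carried out, because both your intermediate target and the lemma as literally stated are false under the plain hypothesis $\sum_{u\ge 1}|\hat\mu(u)|/u < \infty$. Take $\mu$ to be the Riesz product $\prod_{n\ge 0}\bigl(1+\cos 2\pi 4^n x\bigr)$ and $a_n = 4^n$. Every $u$ in the support of $\hat\mu$ has a unique representation $u = \sum_{n\in S}\epsilon_n 4^n$ with $\epsilon_n = \pm 1$, and there $|\hat\mu(u)| = 2^{-|S|}$; since such a $u$ with $\max S = j$ satisfies $u > \tfrac{2}{3}4^j$ and there are at most $2^{j}$ admissible pairs $(S,\epsilon)$ per $j$ after weighting, one computes $\sum_{u\ge1}|\hat\mu(u)|/u \le \tfrac{3}{2}\sum_j 2^{-j} < \infty$. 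Yet $\hat\mu(4^n) = \tfrac12$ for every $n$, so $\frac1N\sum_{n<N}\int e(4^n x)\,d\mu = \tfrac12$ for all $N$, and by Weyl's criterion plus dominated convergence the sequence $(4^n x)$ is \emph{not} uniformly distributed modulo one for $\mu$-almost every $x$. Note that all the facts you invoke hold in this example --- the $a_n$ are distinct, and at most $O(ND)$ pairs have difference at most $D$ --- while your series diverges: $\sum_N N^{-3}\sum_d r_N(d)|\hat\mu(d)| = \sum_N N^{-3}\cdot\tfrac{N(N-1)}{2}\cdot\tfrac14 = \infty$. What actually makes Lyons' proof work is a per-index use of distinctness: for fixed $j$ the integers $m(a_j-a_k)$, $k<j$, are distinct and nonzero, hence $\sum_{k<j}|\hat\mu(m(a_j-a_k))| \le 2\sum_{i\le j}\overline{|\hat\mu|}(i)$, where $\overline{|\hat\mu|}$ is the \emph{nonincreasing rearrangement} of $(|\hat\mu(u)|)_{u\ge1}$; summing against $j^{-2}$ then yields a constant times $\sum_i \overline{|\hat\mu|}(i)/i$. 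This is why Lyons' hypothesis is stated for the rearrangement (as the paper's parenthetical remark records), and the Riesz product shows that hypothesis is genuinely stronger, not a cosmetic variant: there $\overline{|\hat\mu|}(i)\ge\tfrac14$ for all $i$, so the rearranged series diverges. The repair is to prove (and use) the rearranged version; that costs nothing for the paper's application, since the bound $|\hat\mu(u)| = O(1/\sqrt{u})$ obtained in Section 3 implies $\overline{|\hat\mu|}(i) = O(1/\sqrt{i})$ as well.
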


(Lyons only states the theorem for strictly increasing sequences $(a_n)$, and shows, more generally, that in the hypothesis one can replace the sequence $(|\hat \mu(u)|)_{u \in \NN}$ by its nonincreasing rearrangement. It is easy to check, however, that the proof only requires that the values of the sequence $(a_n)$ be distinct. See also \cite{parreau:queffelec:09}.) Notice that a mild rate of convergence of $\hat \mu(u)$ to $0$, such as $O(1 / (\log u)^\delta)$ for any $\delta > 0$, is sufficient to meet the criteria of Lemma~\ref{lyons:lemma}. 

We will show that with the choice of $C$ and $\mu$ above, we have $\hat \mu(u) = O(1 / \sqrt{u})$, which suffices to prove Theorem~\ref{main:kurtz:thm}. The intuition behind the theorem is that multiplying $x$ by some number and testing for membership in an interval only enables one to probe ``local'' properties of binary expansion of $x$, and so a sequence $(a_n)$ cannot exploit correlations between bits that are far apart. This intuition is reflected in the fact that the mass of $C$ is distributed sufficiently uniformly throughout the interval $[0,1]$ that periodic functions like $e(u x)$ do not detect significant irregularities. The rest of this section is devoted to carrying out the calculations that back up these intuitions.

Since we are only estimating $\hat \mu(u)$ up to a multiplicative constant, we can assume that $u \geq 16$. Find $n$ such that $2^{2^{2(n+1)}} \leq u < 2^{2^{2(n+2)}}$. Viewing $C$ as a subset of $2^\NN$, write
\[
 C = \{ \pi \sigma b \tau b \rho \st \pi \in P, \sigma \in S, b \in \{0,1\}, \tau \in T, \rho \in R \}
\]
where:
\begin{itemize}
 \item $P$ is the set of binary strings $\pi$ of length $2^{2n+1}$ such that for every $m \leq n$, $(\pi)_{2^{2m}} = (\pi)_{2^{2m+1}}$
 \item $S$ denotes the set of binary strings of length $2^{2n+1} - 1$
 \item $T$ denotes the set of binary strings of length $2^{2n+2} - 1$.
 \item $R$ denotes the set of infinite binary strings $\rho$ such that for every $m > n +1$, $(\rho)_{2^{2m} - 2^{2(n+1)+1}} = (\rho)_{2^{2m+1} - 2^{2(n+1)+1}}$.
\end{itemize}
Let $\alpha$ denote a string of $0$'s of length $2^{2(n+1)+1}$, let $C' = \{ \alpha \rho \st \rho \in R \}$, and for any $r$, let $C' + r$ denote the translate of $C'$ (now viewed as a subset of $[0,1]$) by $r$. For any binary sequence $\beta$, let $n(\beta)$ denote the binary number denoted by $\beta$. Notice that as $\sigma$ ranges over $S$, $n(\sigma)$ ranges from $0$ to $2^{2^{2n+1} - 1} -1$, and as $\tau$ ranges over $T$, $n(\tau)$ ranges from $0$ to $2^{2^{2(n+1)} - 1} - 1$. Thus we can describe $C$ as
\[
\bigcup_{\pi \in P} \bigcup_{i < 2^{2^{2n+1} - 1}} \bigcup_{j < 2^{2^{2(n+1)} - 1}} \bigcup_{b \in \{0,1\}}
\left( C' + \frac{n(\pi)}{2^{2^{2n + 1}}} + \frac{i}{2^{2^{2(n+1)}-1}} + \frac{b}{2^{2^{2(n+1)}}} + \frac{j}{2^{2^{2(n+1)+1}-1}} + \frac{b}{2^{2^{2(n+1)+1}}} \right).
\]
Using the identity $e(x + y) = e(x)e(y)$, the integral we need to evaluate, $\int_C e(u x) d\mu$, is therefore equal to
\begin{multline*}
\sum_{\pi \in P} \sum_{i < 2^{2^{2n+1} - 1}} \sum_{j < 2^{2^{2(n+1)} - 1}} \sum_{b \in \{0,1\}} \\
e\left(\frac{u n(\pi)}{2^{2^{2n + 1}}}\right) \cdot e\left(\frac{ui}{2^{2^{2(n+1)}-1}}\right) \cdot e\left(\frac{uj}{2^{2^{2(n+1)+1}-1}}\right) \cdot e\left(\frac{ub}{2^{2^{2(n+1)}}}\right) \cdot e\left(\frac{ub}{2^{2^{2(n+1)+1}}} \right) \cdot \int_{C'} e(ux) d\mu.
\end{multline*}
This can be rewritten as
\begin{multline*}
\left( \sum_{\pi \in P} e\left(\frac{u n(\pi)}{2^{2^{2n + 1}}}\right) \right) \cdot
\left( \sum_{i < 2^{2^{2n+1} - 1}} e\left(\frac{ui}{2^{2^{2(n+1)}-1}}\right) \right) \cdot
\left( \sum_{j < 2^{2^{2(n+1)} - 1}} e\left(\frac{uj}{2^{2^{2(n+1)+1}-1}}\right) \right) \cdot \\
\left(1 +e\left(\frac{u}{2^{2^{2(n+1)}}}\right) \cdot e\left(\frac{u}{2^{2^{2(n+1)+1}}} \right) \right) \cdot
\int_{C'} e(ux) d\mu.
\end{multline*}
Since $|e(x)| = 1$ for every $x$, by the triangle inequality the absolute value of the first term is at most the cardinality of $P$, which is $2^{2^{2n+1} - n}$. Since $e(\theta i) = e(\theta)^i$ for every $\theta$ and $i$, the second term is a geometric progression. If $2^{2^{2(n+1)}-1}$ divides $u$, each term in the sum is equal to $1$. Otherwise, using the identity $1 + x + x^2 + \ldots x^{m-1} = (x^m - 1) / (x - 1)$ with $x =  e\left(\frac{u}{2^{2^{2(n+1)}-1}}\right)$ and $m = 2^{2^{2n+1} - 1}$, and computing
\[
e\left(\frac{u}{2^{2^{2(n+1)}-1}}\right)^{2^{2^{2n+1} - 1}} = e\left(\frac{u}{2^{(2^{2(n+1)}-1) - (2^{2n+1} - 1)}}\right) = e\left(\frac{u}{2^{2^{2n+1}}}\right),
\]
we have that the second term is equal to 
\[
 \begin{cases}
   2^{2^{2n+1} - 1} & \mbox{if $2^{2^{2(n+1)}-1} \mid u$} \\
   \frac{e\left(\frac{u}{2^{2^{2n+1}}}\right) - 1}{e\left(\frac{u}{2^{2^{2(n+1)}-1}}\right) - 1} &
      \mbox{otherwise.}
 \end{cases}
\]
A similar calculation shows that the third term is equal to 
\[
 \begin{cases}
   2^{2^{2(n+1)} - 1} & \mbox{if $2^{2^{2(n+1)+1}-1} \mid u$} \\
   \frac{e\left(\frac{u}{2^{2^{2(n+1)}}}\right) - 1}{e\left(\frac{u}{2^{2^{2(n+1)+1}-1}}\right) - 1} &
      \mbox{otherwise.}
 \end{cases} 
\]
There is nothing more to say about the fourth term, for the moment. The absolute value of the last term, the integral, is at most 
\[
\mu(C') = \frac{1}{2^{2^{2(n+1)+1} - (n+1)}}, 
\]
though we will have to evaluate this term more precisely later on.

We now consider cases, keeping in mind that $e(x) = 1$ if $x$ is an integer, and $e(x) = -1$ if $x$ is equal to half an odd integer. If $u$ is divisible by $2^{2^{2n + 1}}$ but not $2^{2^{2(n+1)}-1}$, the second term, and hence the product, is $0$.

If $u$ is divisible by $2^{2^{2(n+1)}-1}$ but not $2^{2^{2(n+1)}}$, the second term is equal to $2^{2^{2n+1} - 1}$, the third term is equal to
\[
\frac{e\left(\frac{u}{2^{2^{2(n + 1)}}}\right) - 1}{e\left(\frac{u}{2^{2^{2(n+1)+1}-1}}\right) - 1}
\]
and the fourth term is equal to
\[
1 - e\left(\frac{u}{2^{2^{2(n+1)+1}}} \right)
\]
since $u / 2^{2^{2(n+1)}}$ is equal to half an odd integer. But the product of the third and fourth terms is
\[
 - \left(e\left(\frac{u}{2^{2^{2(n + 1)+1}}}\right) - 1\right) \cdot \left(e\left(\frac{u}{2^{2^{(2n+1)}}}\right) + 1 \right)
\]
since the denominator of the third term is of the form $z^2 - 1$, where the fourth term is $1 - z$. In absolute value, then, the product of these terms is at most $4$. Thus the absolute value of $\int_C e(ux) \; d\mu$ is at most
\[
2^{2^{2n+1} - n} \cdot 2^{2^{2n+1} - 1} \cdot 4 \cdot \frac{1}{2^{2^{2(n+1)+1} - (n+1)}} = \frac{4}{2^{2^{2(n+1)}}} < \frac{4}{\sqrt u},
\]
since $u > 2^{2^{2(n+2)}}$.

If $u$ is divisible by $2^{2^{2(n+1)}}$ but not $2^{2^{2(n+1)+1}-1}$, then the third term, and hence the product, is equal to $0$. If $u$ is divisible by $2^{2^{2(n+1)+1}-1}$ but not $2^{2^{2(n+1)+1}}$, then $u / 2^{2^{2(n+1)}}$ is an integer and $u / 2^{2^{2(n+1)+1}}$ is a half integer, in which case the fourth term, and hence the product, is equal to $0$. 

Hence we are left with two cases: first, where $u$ is not divisible by $2^{2^{2n + 1}}$, and, second, where $u$ is divisible by $2^{2^{2(n+1)+1}}$. In both cases, we need to further expand the value of $\int_{C'} e(ux) \; d\mu$. Write 
\[    
C' = \{ \alpha \sigma' b \tau' b \rho' \st \sigma' \in S', \tau' \in T', b \in \{0, 1\}, \rho' \in R' \},
\]
where, as before, $\alpha$ is a string of $0$'s of length $2^{2(n+1)+1}$, and $S'$, $T'$, $R'$ are obtained by replacing $n$ by $n+1$ in the definitions of $S$, $T$, and $R$, respectively. Define $C''$ analogously. As above, $\int_{C'} e(ux) \; d\mu$ evaluates to
\begin{multline*}
\left( \sum_{i' < 2^{2^{2(n+1)+1} - 1}} e\left(\frac{ui'}{2^{2^{2(n+1)+1}-1}}\right) \right) \cdot
\left( \sum_{j' < 2^{2^{2(n+2)} - 1}} e\left(\frac{uj'}{2^{2^{2(n+2)}-1}}\right) \right) \cdot \\
\left(1 +e\left(\frac{u}{2^{2^{2(n+2)}}}\right) \cdot e\left(\frac{u}{2^{2^{2(n+2)+1}}} \right) \right) \cdot
\int_{C''} e(ux) d\mu.
\end{multline*}
In the case where $u$ is divisible by $2^{2^{2(n+1)+1}}$, the argument proceeds as before. Since, by assumption, $u < 2^{2^{2(n+2)}}$, we know that $u$ is not divisible by $2^{2^{2(n+2)}}$, and the argument terminates after the second new case.

Thus we are left with the situation where $u$ is not divisible by $2^{2^{2n+1}}$, in which case $\int_C e(ux) \; d\mu$ evaluates to the following:
\begin{multline*}
\left( \sum_{\pi \in P} e\left(\frac{u n(\pi)}{2^{2^{2n + 1}}}\right) \right) \cdot \\
\left(\frac{e\left(\frac{u}{2^{2^{2n+1}}}\right) - 1}{e\left(\frac{u}{2^{2^{2(n+1)}-1}}\right) - 1} \right) \cdot
\left(\frac{e\left(\frac{u}{2^{2^{2(n+1)}}}\right) - 1}{e\left(\frac{u}{2^{2^{2(n+1)+1}}}\right) - 1} \right) \cdot
\left(1 +e\left(\frac{u}{2^{2^{2(n+1)}}}\right) \cdot e\left(\frac{u}{2^{2^{2(n+1)+1}}} \right) \right) \cdot \\
\left(\frac{e\left(\frac{u}{2^{2^{2(n+1)+1}}}\right) - 1}{e\left(\frac{u}{2^{2^{2(n+2)}-1}}\right) - 1} \right) \cdot
\left(\frac{e\left(\frac{u}{2^{2^{2(n+2)}}}\right) - 1}{e\left(\frac{u}{2^{2^{2(n+2)+1}}}\right) - 1} \right)\cdot
\left(1 +e\left(\frac{u}{2^{2^{2(n+2)}}}\right) \cdot e\left(\frac{u}{2^{2^{2(n+2)+1}}} \right) \right) \cdot \\
\int_{C''} e(ux) d\mu.
\end{multline*}
Notice that the denominator of the second fraction is equal to the numerator of the third. Also, the denominator of the first fraction is of the form $z - 1$, where the numerator of the second fraction is $z^2 - 1$. Simplifying the quotient then leaves a multiplicand with absolute value at most 2; and similarly for the denominator of the third fraction and the numerator of the fourth. The numerator of the first fraction has absolute value at most 2, as do the fourth and seventh terms in the product. Finally, the absolute value of the last term is at most 
\[
\mu(C'') = \frac{1}{2^{2^{2(n+2)+1} - (n+2)}}.
\]
Thus the absolute value of $\int_C e(ux) \; d\mu$ is at most
\[
2^{2^{2n+1} - n} \cdot 2^5 \cdot \frac{1}{\left| e\left(\frac{u}{2^{2^{2(n+2)+1}}}\right) - 1\right|} \cdot \frac{1}{2^{2^{2(n+2)+1} - (n+2)}} = \frac{2^{2^{2n+1} +7}}{2^{2^{2(n+2)+1}}} \cdot \frac{1}{\left| e\left(\frac{u}{2^{2^{2(n+2)+1}}}\right) - 1 \right|}.
\]
All we need to do now is to get a lower bound on the denominator of the last fraction, and then use the lower bound $2^{2^{2(n+1)}}$ on $u$ to show that the resulting expression is $O(1 / \sqrt u)$. When $\theta$ is any real value satisfying $|\theta| \leq 1$, a Taylor-series approximation shows
\[
 | e^{i\theta} - 1 | = \sqrt {2|1 - \cos \theta|} \geq \theta / \sqrt 2.
\]
Thus when $x < 1 / 2\pi$, we have $|e(x) - 1| \geq 2\pi x / \sqrt 2 \geq x$. As a result, the absolute value of $\int_C e(ux)\; d\mu$ is at most
\[
\frac{2^{2^{2n+1} +7}}{2^{2^{2(n+2)+1}}} \cdot \frac{2^{2^{2(n+2)+1}}}{u} =  \frac{2^7 \cdot 2^{2^{2n+1}}}{u} \leq
\frac{2^7}{\sqrt u},
\]
since we have $u \geq 2^{2^{2n +2}}$.

\section{Final comments}

The goal here has been to explore the extent to which real numbers satisfying the conclusion of Weyl's theorem for computable sequences look ``random.'' Generally speaking, the failure of Kurtz randomness is striking, since it can be expressed in terms of observable properties. That is, the statement that a real number $x$ is in an effectively closed set $C$ is a universal property, which means that if $x$ is \emph{not} in $C$, then one can verify this fact by carrying out a finite computation on finitely many bits of $x$. Thus, Section~\ref{ud:kurtz:section} shows that there are real numbers $x$ that are UD random, but, at any finite level of accuracy, can be seen to satisfy a distinctly nonrandom property.

UD random reals may fail to be random in other striking ways. For example, one can ask:
\begin{quote}
 Is there a UD random $x$ such that every initial segment of the binary representation of $x$ has at least as many 1's as 0's? More generally, does Weyl's theorem hold relative to a suitable measure on this set?
\end{quote}
I suspect that the answer to these questions is ``yes.''

It is interesting to compare the notion of UD randomness to the notion of Church stochasticity. Roughly speaking, a real number $x$ is Church stochastic if the zeros in any subsequence of digits of the binary representation of $x$ obtained by a computable ``selection procedure'' has a limiting density of one half. This notion is, in a sense, orthogonal to UD randomness: by a suitable choice of the sequence $(a_n)$, a test for UD randomness can sample bits in any order; but, to compensate, a computable selection procedure is allowed to see the previous bits of $x$ before deciding whether or not to select the next bit. Ville's theorem \cite{ville:39} (see also \cite[Section 6.5]{downey:hirschfeldt:10}) shows that there are real numbers $x$ that are Church stochastic (in fact, with respect to any countable collection of selection procedures) but have the property that every initial segment of the binary representation of $x$ has at least as many $1$'s as $0$'s. Wang \cite{wang:96b} has shown that there are numbers $x$ that are Schnorr random but not Church stochastic (see also \cite[Section 8.4]{downey:hirschfeldt:10}). By Theorem~\ref{main:thm}, this implies that UD randomness does not imply Church stochasticity. One can ask about the converse direction:
\begin{quote}
 Is there a real number $x$ that is Church stochastic but not UD random?
\end{quote}
See \cite{merkle:03} for various notions of stochasticity and some of their properties.

Ultimately, I hope the results here suggest that it is interesting and worthwhile to study the relationships between notions of randomness that are implicit in ordinary mathematical theorems.


\end{document}